\documentclass[11pt]{article}
\usepackage{amsmath}
\usepackage{dsfont}
\usepackage{mathrsfs}
\usepackage{amsmath,amssymb}
\usepackage{amsfonts}
\usepackage{hyperref}
\usepackage{amsthm}
\usepackage{graphicx}
\usepackage{subfigure}
\usepackage{xcolor}

\usepackage{bbm}
\usepackage{mathrsfs}
\usepackage{txfonts}
\usepackage{color}
\usepackage{pict2e}
\usepackage{palatino,epsfig,latexsym}
\usepackage{epsf,xy,epic,amscd}
\usepackage{lineno}

\usepackage{cite}

\theoremstyle{plain}
\newtheorem{theorem}{Theorem}[section]
\newtheorem{lemma}[theorem]{Lemma}

\newtheorem{proposition}[theorem]{Proposition}

\newtheorem{Bounded Diameter Lemma}[theorem]{Bounded Diameter Lemma}
\theoremstyle{definition}

\newtheorem{remark}[theorem]{Remark}

\newcommand{\Hmm}[1]{\leavevmode{\marginpar{\tiny%
$\hbox to 0mm{\hspace*{-0.5mm}$\leftarrow$\hss}%
\vcenter{\vrule depth 0.1mm height 0.1mm width \the\marginparwidth}%
\hbox to
0mm{\hss$\rightarrow$\hspace*{-0.5mm}}$\\\relax\raggedright #1}}}

\hfuzz=\maxdimen
\tolerance=10000
\hbadness=10000

\DeclareFixedFont{\Acknowledgment}{OT1}{cmr}{bx}{n}{14pt}
\textwidth 150mm \textheight 200mm \hoffset -1.2cm \voffset -0.5cm
\linespread{1.1}

\begin{document}

\title{Combinatorial Yamabe flow on hyperbolic bordered surfaces}
\author{Shengyu Li, Xu Xu, Ze Zhou}
\date{}

\maketitle

\begin{abstract}
This paper studies the combinatorial Yamabe flow on hyperbolic bordered surfaces.
We show that the flow exists for all time and converges exponentially fast to conformal factor which produces a hyperbolic surface whose lengths of boundary components are equal to prescribed positive numbers. This provides an algorithm to such problems.

\medskip
\noindent{\bf Mathematics Subject Classifications (2000):} 52C26.

\end{abstract}



\section{Introduction}
Discrete conformal structure on polyhedral surfaces is a discrete analogue of the conformal structure on smooth surfaces. Over the past decades, the discrete conformal structures on closed surfaces have been extensively studied. See e.g.
\cite{BPS,ChowLuo-jdg,GT,GGLSW,GLSW,GLW,GL,Guo,Luo1,LW,L3,SWGL,Wu,WGS,WZ,X1,X2,T1,ZGZLYG}.
In order to investigate the piecewise linear conformal geometry, Luo~\cite{Luo1} introduced the combinatorial Yamabe flow for piecewise flat metrics on triangulated closed surfaces.
An analogous flow was studied by Guo \cite{Guo} on surface with boundary in hyperbolic geometry background.
He proved that along the combinatorial Yamabe flow, any initial hyperbolic bordered surface converges to a complete hyperbolic surface with cusps.
Motivated by the works of Luo \cite{Luo1} and Guo \cite{Guo}, in this paper we introduce a more general combinatorial Yamabe flow on ideally triangulated bordered surfaces, aiming at finding hyperbolic metrics on bordered surfaces with prescribed lengths of geodesic boundaries.
We prove the longtime existence and global convergence of this flow.

Let $T^{\ast}$ be a triangulation of a closed surface $S$ with the sets of vertices, edges and faces denoted by $T^{0},T^{1},T^{2}$, respectively.
Suppose $N(T^{0})$ is a small open regular disjoint neighborhood of the union of all vertices $T^{0}$.
Then $\Sigma=S \setminus N(T^{0})$ is a compact surface with $|T^{0}|$ boundary components.
The intersection $T=T^{\ast}\cap\Sigma$ is called an ideal triangulation of the surface $\Sigma$.
The intersections $E=T^{1}\cap\Sigma$, $F=T^{2}\cap\Sigma$ are called ideal edges and ideal faces of $\Sigma$ respectively.
The intersection of an ideal face and $\partial\Sigma$ are called B-arcs.
For simplicity, we write the set of boundary components as $\{1,2,\cdots,n\}$. Let $ij$ denote the ideal edge between two adjacent boundary components $i,j$. Similarly, we use $ijk$ to represent the ideal face adjacent to boundary components $i,j,k$. See the work of Luo \cite{Luo2} for basic background on ideal triangulation.

Let $E, F$ be the sets of ideal edges and ideal faces of $T$, respectively.
A discrete hyperbolic metric associated to $T$ is a vector $l\in\mathbb{R}_{+}^{\mid E\mid}$ assigning each ideal edge $ij$ a positive number $l_{ij}$.
For an ideal face adjacent to boundary components $i,j,k$, by Lemma \ref{21} in next section, there exists an essentially unique right-angled geodesic hexagon whose three non-pairwise adjacent edges having lengths $l_{ij},l_{jk},l_{ki}$.
Gluing all such geodesic hexagons produces a hyperbolic surface with geodesic boundary.

A discrete conformal factor is a vector $w\in\mathbb{R}^n$ assigning each boundary component $i$ a number $w_{i} $. Fix a hyperbolic metric $l^{0}\in\mathbb{R}_{+}^{\mid E\mid}$ on $(\Sigma,T)$. Following Guo \cite{Guo}, we say a discrete conformal factor is admissible if
\[
w_{i}+w_{j}>-\ln\cosh\frac{l_{ij}^{0}}{2}
\]
for each edges $ij\in E$. Given a metric $l^{0}$ and an admissible discrete conformal factor $w$, we get a new metric $l=w\ast l^{0} \in\mathbb{R}_{+}^{\mid E\mid}$ satisfying
\begin{equation}
\nonumber
\cosh\frac{l_{ij}}{2}=e^{w_{i}+w_{j}}\cosh\frac{l^{0}_{ij}}{2}.
\end{equation}
Suppose $\emph{W}\subset\mathbb{R}^{n}$ is the space of all admissible discrete conformal factors. For an ideal edge $e=ij\in E$, we define
\begin{equation}
\nonumber
W^{e}=\left\{w\in\mathbb{R}^n:
w_{i}+w_{j}>-\ln\cosh\frac{l_{ij}^{0}}{2}\right\}.
\end{equation}
Similarly, for an ideal face $f=ijk$, write
\begin{equation}
\nonumber
W^{f}=\left\{w\in\mathbb{R}^n:
w_{r}+w_{s}>-\ln\cosh\frac{l_{rs}^{0}}{2}, \{r,s\}\subset\{i,j,k\}\right\}.
\end{equation}
Apparently, $W^{e}$ and $W^{f}$ are convex sets. Thus
\[
W=\cap_{e\in E} W^{e}=\cap_{f\in F} W^{f}
\]
is also a convex set. Note that each $w\in W$ gives a metric $l=w\ast l^{0}\in\mathbb{R}_{+}^{\mid E\mid}$ on a hyperbolic surface $\Sigma_{l}$ with geodesic boundary.
For $i=1,2,\cdots,n$, let $B_{i}$ be the length of geodesic boundary component $i$ of surface $\Sigma_{l}$. We have
\[
B_{i}=\sum_{f\in F_{i}}\theta_{i}^{f},
\]
where $\theta^{f}_{i}$ is the length of the B-arc marked by boundary component $i$ and $F_{i}$ is the set of ideal faces having boundary component $i$.
This gives rise to a map
\[
\begin{aligned}
\psi: \qquad\qquad W \qquad \quad &\longrightarrow \qquad\quad\mathbb{R}_{+}^n\qquad\qquad\\
\big(w_{1},w_{2},\cdots,w_n\big)&\longmapsto \big(B_{1},B_{2},\cdots,B_n\big).\\
\end{aligned}
\]

By variational principle, Guo \cite{Guo} proved the following result.

\begin{theorem}[Guo]\label{1234}
The map $\psi$ is a diffeomorphism.
\end{theorem}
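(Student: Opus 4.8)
The plan is to show that $\psi$ is a smooth, injective local diffeomorphism whose image is all of $\mathbb{R}_+^n$, and then invoke the standard fact that such a map is a diffeomorphism. Smoothness is immediate: each B-arc length $\theta_i^f$ is a smooth function of the edge lengths $l_{rs}$ by the trigonometry of the right-angled hexagons, and the $l_{rs}$ depend smoothly on $w$ through $\cosh(l_{rs}/2)=e^{w_r+w_s}\cosh(l_{rs}^0/2)$. The core of the argument is a variational principle, so I would first establish that the Jacobian $\big(\partial B_i/\partial w_j\big)$ is symmetric and negative definite on $W$. Since $B_i=\sum_{f\in F_i}\theta_i^f$ splits as a sum over faces, it suffices to analyze one ideal face $f=ijk$ and the local map $(w_i,w_j,w_k)\mapsto(\theta_i^f,\theta_j^f,\theta_k^f)$.

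For a single face I would compute the $3\times 3$ matrix $\partial\theta_i^f/\partial w_j$ explicitly: express each B-arc length through the right-angled hexagon cosine law in terms of $l_{ij},l_{jk},l_{ki}$, and differentiate using $\partial l_{rs}/\partial w_t$, which is nonzero only when $t\in\{r,s\}$ by the conformal relation. The goal is to verify the symmetry $\partial\theta_i^f/\partial w_j=\partial\theta_j^f/\partial w_i$ together with negative definiteness of the resulting symmetric matrix; this hexagon computation is the key technical lemma. Symmetry shows that the $1$-form $\sum_i\theta_i^f\,dw_i$ is closed on the convex set $W^f$, hence exact, yielding a face potential $\mathcal{F}^f$ with $\partial\mathcal{F}^f/\partial w_i=\theta_i^f$. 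Summing over faces produces a potential $\mathcal{E}=\sum_{f\in F}\mathcal{F}^f$ on the convex set $W$ with $\nabla\mathcal{E}=(B_1,\dots,B_n)=\psi$, and the negative definiteness of each face block makes $-\mathcal{E}$ strictly convex. A strictly convex function on a convex domain has an injective gradient, so $\psi$ is injective; and the nonsingular Jacobian makes $\psi$ a local diffeomorphism by the inverse function theorem.

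It remains to identify the image as $\psi(W)=\mathbb{R}_+^n$, and this is the step I expect to be the main obstacle. Because $\psi$ is a local diffeomorphism, its image is open, so by connectedness of $\mathbb{R}_+^n$ it is enough to show the image is also closed in $\mathbb{R}_+^n$, i.e. that $\psi$ is proper as a map into $\mathbb{R}_+^n$. I would argue that any sequence $w^{(k)}$ leaving every compact subset of $W$, whether running to infinity or approaching $\partial W$, forces $\psi(w^{(k)})$ to leave every compact subset of $\mathbb{R}_+^n$. Concretely, when the conformal factors grow the edge lengths $l_{rs}$ grow and the hexagons degenerate so that the opposite B-arcs shrink, driving some $B_i\to 0$; when $w$ approaches a facet of $\partial W$ some edge length $l_{rs}\to 0$ and the two adjacent B-arcs blow up, driving some $B_i\to\infty$. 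In either case the limit lies on the boundary of $\mathbb{R}_+^n$ (including infinity), so no interior point can be a limit of $\psi(w^{(k)})$ along a divergent sequence, whence preimages of compact sets are compact. Properness together with openness and injectivity forces $\psi$ to be a bijection onto $\mathbb{R}_+^n$, and its smooth nonsingular inverse makes it a diffeomorphism. The delicate point throughout is the quantitative control of $\theta_i^f$ near the two types of degeneration, which must be extracted from the hexagon formulas.
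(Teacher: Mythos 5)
Your overall route is the same as the paper's: the per-face Jacobian computation giving symmetry and negative definiteness, the closed $1$-form and convex potential giving injectivity of the gradient map, openness of the image via the nonsingular Jacobian (the paper uses invariance of domain, you use the inverse function theorem -- either works), and properness plus connectedness of $\mathbb{R}_{+}^n$ giving surjectivity. The problem is in your properness argument, and it is exactly the gap that this paper identifies in Guo's original proof. Your dichotomy for divergent sequences is ``conformal factors grow'' versus ``$w$ approaches a facet of $\partial W$,'' and for the first alternative you claim the edge lengths all grow, the hexagons degenerate, and some $B_i\to 0$. This misses the case where some $w_i\to-\infty$. Admissibility then forces $w_r\to+\infty$ for every neighbor $r$ of $i$, but the edge lengths $l_{ir}$ depend only on the sums $w_i+w_r$ and may stay bounded. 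In that regime your mechanism fails: for a face $f=irs$ one has $l_{rs}\to\infty$ while $l_{ir}$ can tend to a finite positive limit $L$, and the cosine law gives $\cosh\theta_r^f=\big(\cosh l_{si}+\cosh l_{ir}\cosh l_{rs}\big)/\big(\sinh l_{ir}\sinh l_{rs}\big)\to\coth L>1$, so $\theta_r^f$ tends to a \emph{positive} limit and $B_r\not\to 0$ even though $w_r\to+\infty$. Nothing in your two cases then forces $\psi(w^{(k)})$ to leave compact subsets of $\mathbb{R}_{+}^n$.

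The paper closes this case with a separate asymptotic estimate (Lemma \ref{2.3}): as $(w_i,w_j,w_k)\to(-\infty,+\infty,+\infty)$ one has $\theta_i\to+\infty$, because in the expression for $\cosh\theta_i$ the factors $1/\sqrt{\rho_{ki}e^{4w_i}-2e^{2w_i-2w_k}}$ and $1/\sqrt{\rho_{ij}e^{4w_i}-2e^{2w_i-2w_j}}$ blow up as $w_i\to-\infty$. Hence for the boundary component whose factor tends to $-\infty$ one gets $B_i\to+\infty$, which restores properness. To complete your argument you must add this third case (some coordinate tending to $-\infty$, handled by an estimate of this type); the other two cases of your properness analysis, and everything preceding it, match the paper's proof.
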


What is more, Guo \cite{Guo} investigated the following combinatorial Yamabe flow
\begin{equation}
\label{202159}
\frac{dw_{i}(t)}{dt}=B_{i}
\end{equation}
with an initial vector $w(0)\in W$.
He proved that along the flow (\ref{202159}) any initial hyperbolic bordered surface converges to a complete hyperbolic surface with cusps.

Unfortunately, there is a gap in Guo's proof \cite{Guo} of Theorem \ref{1234} (see Remark \ref{20212021} for details). In this paper we will fix the gap and provide a complete proof. Meanwhile, to construct more general bordered hyperbolic surfaces, we consider the flow
\begin{equation}
\label{1111111111}
\frac{dw_{i}(t)}{dt}=B_{i}-b_{i},
\end{equation}
where each $b_{i}$ is a prescribed positive number. By using Theorem \ref{1234} and introducing suitable Lyapunov functions, we prove the following result.
\begin{theorem}
\label{123456}
The flow (\ref{1111111111}) exists for all time and converges exponentially fast to an admissible discrete conformal factor which produces a hyperbolic surface with geodesic boundary components having lengths $b_{1},b_{2},\cdots,b_{n}$, respectively.
\end{theorem}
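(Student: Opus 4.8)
The plan is to view $\bar w:=\psi^{-1}(b)$ as the unique equilibrium of (\ref{1111111111}) and to show that every trajectory converges to it exponentially. Since $b=(b_1,\dots,b_n)\in\BR_+^n$ and $\psi$ is a diffeomorphism by Theorem \ref{1234}, there is a unique $\bar w\in W$ with $\psi(\bar w)=b$, i.e. $B_i(\bar w)=b_i$ for all $i$; this is the only zero of the vector field $w\mapsto (B_1-b_1,\dots,B_n-b_n)$. As each B-arc length, hence each $B_i$, depends real-analytically on $w$ through the lengths $l=w\ast l^0$, the right-hand side of (\ref{1111111111}) is smooth on the open convex set $W$, so for any $w(0)\in W$ there is a unique maximal solution $w:[0,T)\to W$.

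The analytic backbone is the variational structure behind Theorem \ref{1234}: the $1$-form $\sum_i B_i\,dw_i$ is closed and the Jacobian $\Lambda(w):=\big(\partial B_i/\partial w_j\big)_{i,j}$ is symmetric and negative definite on $W$ (this can be extracted from the right-angled hexagon identity $\cosh\theta_i^f=(\cosh l_{jk}+\cosh l_{ij}\cosh l_{ki})/(\sinh l_{ij}\sinh l_{ki})$). Fix a smooth $\mathcal G$ on $W$ with $\partial\mathcal G/\partial w_i=B_i$; then $\mathcal G$ is strictly concave and
\begin{equation}\nonumber
\mathcal L(w):=\sum_{i=1}^n b_i w_i-\mathcal G(w)
\end{equation}
is strictly convex with $\nabla\mathcal L=(b_i-B_i)_i$, so $\bar w$ is its unique critical point, hence its unique global minimizer. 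Along the flow
\begin{equation}\nonumber
\frac{d}{dt}\mathcal L(w(t))=\sum_{i=1}^n (b_i-B_i)\frac{dw_i}{dt}=-\sum_{i=1}^n (B_i-b_i)^2\le 0 ,
\end{equation}
with equality only at $\bar w$. I will also use $\Phi(w):=\tfrac12\sum_i (B_i-b_i)^2$, for which $\dot\Phi=(B-b)^\top\Lambda(w)(B-b)\le 0$.

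The crux, and the step I expect to be the main obstacle, is the a priori estimate that the trajectory stays in a compact subset of $W$; this forces $T=\infty$. There are two ways to leave $W$, handled separately. (i) \emph{Degeneration of a length.} Near the finite part of $\partial W$ some $\epsilon_{ij}:=w_i+w_j+\ln\cosh\tfrac{l^0_{ij}}{2}\to 0^+$, so $l_{ij}\to0$ and the hexagon identity gives $\theta_i^f\sim-\tfrac12\ln\epsilon_{ij}\to+\infty$, whence $B_i,B_j\to+\infty$. Then $\dot\epsilon_{ij}=B_i+B_j-b_i-b_j>0$ once $\epsilon_{ij}$ is small, a barrier forcing $\epsilon_{ij}(t)\ge\delta_0>0$ and so all lengths bounded below. (ii) \emph{Escape to infinity.} Here I use that $\mathcal L$ is coercive on $W$: this is a convex-duality consequence of Theorem \ref{1234}, since $\nabla\mathcal G=\psi$ maps $W$ onto the open set $\BR_+^n$ and $b\in\BR_+^n$, so the sublevel sets of $\mathcal L$ are bounded. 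As $\mathcal L$ is non-increasing, the trajectory lies in $\{\mathcal L\le\mathcal L(w(0))\}$, and intersecting with $\{\epsilon_{ij}\ge\delta_0\}$ from (i) confines it to a compact $\mathcal K\subset W$. Making both estimates quantitative and uniform is where the real work lies.

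On the compact set $\mathcal K$ the continuous negative-definite matrix $\Lambda(w)$ satisfies $\Lambda(w)\preceq-\lambda I$ for some $\lambda>0$, hence $\dot\Phi\le-2\lambda\Phi$ and $\Phi(w(t))\le\Phi(w(0))e^{-2\lambda t}$, i.e. $|\psi(w(t))-b|\le Ce^{-\lambda t}$. Since $\psi$ is a diffeomorphism, $\psi^{-1}$ is Lipschitz on the compact set $\psi(\mathcal K)$, which contains $b=\psi(\bar w)$, so $|w(t)-\bar w|\le C'e^{-\lambda t}$; thus $w(t)\to\bar w$ exponentially fast. By construction $\psi(\bar w)=b$, so the limiting metric $\bar w\ast l^0$ is hyperbolic with geodesic boundary components of the prescribed lengths $b_1,\dots,b_n$, which completes the proof.
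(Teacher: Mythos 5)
Your proof is correct, and it shares the overall Lyapunov strategy of the paper (a strictly convex potential with gradient $b-B$, plus exponential decay of $\|B-b\|^2$ on a compact set), but it handles the crucial confinement step by a genuinely different mechanism. The paper augments the potential to a single proper Lyapunov function $\Psi(w)-\Psi(w^\ast)+\sum_i(B_i-b_i)^2$, proves it blows up on \emph{every} part of $\partial W$ (coercivity of $\Psi$ at infinity for the $\pm\infty$ directions, blow-up of some $B_i$ near the finite boundary), and then uses its monotonicity along the flow to trap the trajectory. You instead split the two escape modes: at the finite boundary you run an ODE barrier argument, observing that $\cosh\theta_i^f\ge\coth l_{ij}$ forces $B_i\to+\infty$ as $\varepsilon_{ij}\to 0^+$ uniformly in the remaining data (note your asymptotic $\theta_i^f\sim-\tfrac12\ln\varepsilon_{ij}$ is only a lower bound in general, but the lower bound is all you need), so that $\dot\varepsilon_{ij}=B_i+B_j-b_i-b_j>0$ below a threshold $\delta_0$ and hence $\varepsilon_{ij}(t)\ge\min(\varepsilon_{ij}(0),\delta_0)$; at infinity you use coercivity of your $\mathcal L$ (the paper's $\Psi$), justified by convex duality from the surjectivity of $\psi$ rather than by the paper's elementary Lemma \ref{20211223}. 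Both routes are valid; the paper's single proper Lyapunov function is more compact to state, while your barrier is more elementary and makes transparent \emph{why} the finite boundary repels the flow (the vector field points inward there), independently of any monotone quantity. Your final step also differs harmlessly: you convert $\|\psi(w(t))-b\|\le Ce^{-\lambda t}$ into $\|w(t)-\bar w\|\le C'e^{-\lambda t}$ via Lipschitz continuity of $\psi^{-1}$ on a compact set, whereas the paper integrates $\dot w_i=B_i-b_i$ from $t$ to $\infty$. Be aware that your symbols $\Lambda$ and $\Phi$ collide with the paper's usage (there $\Lambda$ is the Lyapunov function and $\Phi$ the potential, while $L$ denotes the Jacobian), so they should be renamed if merged into the text.
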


The paper is organized as follows. In Section \ref{S1}, we introduce some basic properties of hexagons.
In Section \ref{2021.10.4}, we give a complete proof of Theorem \ref{1234}.
In Section \ref{2021123}, we derive Theorem \ref{123456}.

\section{Preliminaries}\label{S1}\noindent
We establish several properties on right-angled hyperbolic hexagons. The lemma below is a classical result. We refer to Buser's book \cite[Lemma\,1.7.1]{buser} for a proof.
\begin{lemma}\label{21}
For any three positive real numbers, there exists a right-angled geodesic hexagon in the hyperbolic plane unique up to isometry such that the lengths of the three non-pairwise adjacent edges of the hexagon are the three numbers.
\end{lemma}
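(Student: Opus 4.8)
The plan is to treat existence and uniqueness by different means. Label the six sides of the prospective hexagon in cyclic order as $s_1,t_1,s_2,t_2,s_3,t_3$, where $s_1,s_2,s_3$ are the three pairwise non-adjacent (alternating) sides whose lengths $a,b,c$ are the prescribed numbers, and $t_1,t_2,t_3$ are the three connecting sides, with $t_i$ opposite $s_{i+2}$ (indices mod $3$). Every interior angle is required to equal $\pi/2$; in particular such a hexagon, if it exists, is convex, hence simple and embedded.

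\textbf{Uniqueness.} Assume a right-angled hexagon with alternating sides $a,b,c$ exists. A standard decomposition --- dropping the common perpendicular between two opposite sides, which cuts the hexagon into two right-angled pentagons --- together with the trigonometry of the pieces yields the right-angled hexagon cosine rule: the connecting side of length $\gamma$ lying between the $a$- and $b$-sides satisfies
\[
\cosh\gamma=\frac{\cosh c+\cosh a\cosh b}{\sinh a\sinh b}.
\]
Because $\cosh c>0$ and $\cosh a\cosh b>\sinh a\sinh b$, the numerator exceeds the denominator, so the right-hand side is larger than $1$ and $\gamma$ is a uniquely determined positive number; cyclic permutation pins down the remaining two connecting sides. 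Hence $a,b,c$ force all six side lengths, while all six angles are right angles. Finally, a convex geodesic polygon in $\mathbb{H}^{2}$ is rigid: after laying down one side, each subsequent vertex is determined by the previous side length and the prescribed right turning angle, so the whole figure is determined up to an isometry of $\mathbb{H}^{2}$. This gives uniqueness up to isometry.

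\textbf{Existence.} I would build the hexagon directly and close it up by a continuity argument, the key input being that two ultraparallel geodesics of $\mathbb{H}^{2}$ possess a unique common perpendicular whose length varies continuously and monotonically with the configuration and ranges over all of $(0,\infty)$. Place a segment $s_1$ of length $a$ and erect perpendicular geodesics at its two endpoints on the same side. Continuing from each endpoint by right-angled segments of the prescribed lengths --- $t_1$ then $s_2$ (of length $b$) on one side, $t_3$ then $s_3$ (of length $c$) on the other --- leaves two free parameters, namely the lengths of $t_1$ and $t_3$. I would fix one of them by demanding that the common perpendicular $t_2$ of the geodesics carrying $s_2$ and $s_3$ meet $s_3$ exactly at its free endpoint, thereby reducing closure to the single remaining condition that $t_2$ also meet $s_2$ at its free endpoint. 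Examining the two limits of the surviving parameter (as it tends to $0$ and to $\infty$) produces a sign change, and the intermediate value theorem supplies a solution; the resulting closed geodesic path has all right angles and so bounds the desired hexagon with alternating side lengths $a,b,c$.

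The step I expect to be the main obstacle is precisely this closure in the existence half: arranging the free parameters so that the closing condition reduces to a single monotone scalar equation, and verifying the limiting behaviour needed to apply the intermediate value theorem. Uniqueness, by comparison, is the algebra of the cosine rule together with the standard rigidity of convex hyperbolic polygons, and should be routine once the displayed identity is in hand.
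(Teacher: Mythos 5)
For calibration: the paper offers no proof of Lemma \ref{21} at all --- it is quoted as a classical fact with a pointer to Buser \cite[Lemma 1.7.1]{buser} --- so your attempt is measured against that standard argument rather than anything in the text. Your uniqueness half is correct and essentially the classical one: the right-angled hexagon cosine rule (which the paper records as \eqref{2026}) pins down each connecting side, your verification that the right-hand side exceeds $1$ is right (since $\cosh a\cosh b-\sinh a\sinh b=\cosh(a-b)\ge 1$ and $\cosh c>0$), and the development argument --- equal side lengths and equal angles in cyclic order determine a geodesic polygon up to isometry --- is sound and does not even need convexity.

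The existence half, however, contains a genuine gap, indeed three unproved continuity claims stacked on one another, and you concede as much yourself. (i) Your ``key input'' is misstated: two geodesics of $\mathbb{H}^2$ have a common perpendicular only when they are ultraparallel, and in your construction the lines carrying $s_2$ and $s_3$ need not be ultraparallel for all values of the free parameters. For instance, by the trirectangle relation $\cos\varphi=\sinh a\sinh t$, the line erected perpendicularly at height $a$ above one endpoint of a segment of length $t$ actually \emph{meets} the perpendicular line at the other endpoint whenever $\sinh a\sinh t<1$; so ``the common perpendicular $t_2$'' can simply fail to exist on part of the parameter range, and locating the range where it exists and lies in proper position (so that the closed path is simple rather than crossed) is the heart of the matter, not a known input. (ii) Your intermediate normalization --- choosing $t_3$ so that the foot of $t_2$ is exactly the free endpoint of $s_3$ --- is itself a second scalar equation whose continuous (let alone unique) solvability needs its own monotonicity or intermediate-value argument, which you do not supply; as written you apply IVT to a function that has not been shown to be well defined or continuous. (iii) The sign change at the two ends of the surviving parameter is asserted, not verified. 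The standard repair (and, in substance, Buser's proof) replaces your two-parameter scheme by a one-parameter family: place segments of lengths $a$ and $b$ on the perpendiculars at the two ends of a segment of variable length $t$, erect perpendicular lines at the far endpoints, and show via trirectangle and pentagon trigonometry that for $t$ large these lines are ultraparallel, that the length $c(t)$ of their common perpendicular is continuous and strictly increasing where the configuration is proper, and that $c(t)$ sweeps all of $(0,+\infty)$ --- consistently with the relation $\cosh c(t)=\sinh a\sinh b\cosh t-\cosh a\cosh b$ forced by \eqref{2026}. One monotone family then delivers existence \emph{and} uniqueness simultaneously, which also renders your separate (correct) uniqueness section redundant.
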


Let $l_{ij},l_{jk},l_{ki}$ be the lengths of three non-pairwise adjacent edges $ij,jk,ki$ of a right-angled hyperbolic hexagon and let $\theta_{i},\theta_{j},\theta_{k}$ be the lengths of edges opposite to $jk,ki,ij$, respectively. The cosine law for right-angled hyperbolic hexagons gives
\begin{equation}\label{2026}
\cosh\theta_{i}=\frac{\cosh l_{jk}+\cosh l_{ki}\cosh l_{ij}}{\sinh l_{ki}\sinh l_{ij}}.
\end{equation}
Similarly, the sine law for right-angled hyperbolic hexagons shows
\begin{equation}\label{2027}
\frac{\sinh\theta_{i}}{\sinh l_{jk}}=\frac{\sinh\theta_{j}}{\sinh l_{ki}}=\frac{\sinh\theta_{k}}{\sinh l_{ij}}.
\end{equation}
See also Buser's book \cite[Chap.\,2]{buser} for details.

Fix a right-angled hyperbolic hexagon with lengths $l_{ij}^{0},l_{jk}^{0},l_{ki}^{0}$ of three non-pairwise edges. Set
\begin{equation}
\nonumber
W^{\diamond}=\left\{(w_{i},w_{j},w_{k})\in\mathbb{R}^3:
w_{r}+w_{s}>-\ln\cosh\frac{l_{rs}^{0}}{2}, \{r,s\}\subset\{i,j,k\}\right\}.
\end{equation}
For every vector $(w_{i},w_{j},w_{k})\in W^{\diamond}$,
we get three lengths $l_{ij},l_{jk},l_{ki}$  satisfying
\begin{equation}
\label{123}
\cosh\frac{l_{ij}}{2}=e^{w_{i}+w_{j}}\cosh\frac{l^{0}_{ij}}{2}.
\end{equation}

In this way, we regard $\theta_{i},\theta_{j},\theta_{k}$ as smooth functions of $(w_{i},w_{j},w_{k})\in W^{\diamond}$.
The next result was due to Guo \cite{Guo}.
To make the paper be more self-contained, here we give an independent proof via diagonally dominant property. We mention that the lemma below is slightly stronger than Guo's original  result \cite{Guo}.
\begin{lemma}\label{L-1-1}
The Jacobian matrix of $\theta_{i},\theta_{j},\theta_{k}$ in terms of $w_{i},w_{j},w_{k}$ is symmetric, diagonally dominant and negative definite.
\end{lemma}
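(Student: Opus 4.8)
The plan is to compute the entries of the Jacobian matrix $\Big(\frac{\partial \theta_a}{\partial w_b}\Big)_{a,b\in\{i,j,k\}}$ explicitly using the cosine law (\ref{2026}) together with the conformal change relation (\ref{123}), and then verify the three claimed properties—symmetry, diagonal dominance, and negative definiteness—directly from these formulas. A convenient first move is to pass to the variables $l_{ij},l_{jk},l_{ki}$ as intermediaries: from (\ref{123}) we have $\frac{\partial l_{rs}}{\partial w_t}\neq 0$ only when $t\in\{r,s\}$, and differentiating $\cosh\frac{l_{rs}}{2}=e^{w_r+w_s}\cosh\frac{l^0_{rs}}{2}$ gives $\frac{1}{2}\sinh\frac{l_{rs}}{2}\,\frac{\partial l_{rs}}{\partial w_t}=\cosh\frac{l_{rs}}{2}$ for $t\in\{r,s\}$, so $\frac{\partial l_{rs}}{\partial w_t}=2\coth\frac{l_{rs}}{2}$. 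Thus each $w$-derivative factors through the $l$-variables with a clean common factor, and the chain rule reduces everything to computing $\frac{\partial \theta_a}{\partial l_{rs}}$.

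First I would establish the symmetry of the Jacobian. The standard route is to exhibit the matrix $\Big(\frac{\partial \theta_a}{\partial l_{rs}}\Big)$ as symmetric (a known fact for the cosine law of right-angled hexagons, provable by implicit differentiation of (\ref{2026}) and simplification via the sine law (\ref{2027})), and then observe that the conformal reparametrization $w\mapsto l$ multiplies rows and columns by the same positive factors $2\coth\frac{l_{rs}}{2}$ in a way compatible with symmetry. Concretely, one shows $\frac{\partial \theta_a}{\partial w_b}$ is symmetric in $a,b$ because the off-diagonal entry picks up the symmetric product of the two edge factors. Alternatively—and this is cleaner—symmetry follows from the existence of a concave energy function whose gradient is $(\theta_i,\theta_j,\theta_k)$; but since the lemma is meant to be self-contained via the diagonally dominant property, I would compute the entries head-on.

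The heart of the argument is the sign and dominance analysis. I expect the off-diagonal entries $\frac{\partial\theta_i}{\partial w_j}$ to be positive and the diagonal entries $\frac{\partial\theta_i}{\partial w_i}$ to be negative, with the diagonal strictly dominating: $\big|\frac{\partial\theta_i}{\partial w_i}\big|>\big|\frac{\partial\theta_i}{\partial w_j}\big|+\big|\frac{\partial\theta_i}{\partial w_k}\big|$. Differentiating (\ref{2026}) with respect to the relevant lengths and clearing denominators with $\sinh l_{ki}\sinh l_{ij}$, I would reduce each partial derivative to an explicit expression in $\cosh$ and $\sinh$ of the three edge lengths, then use the hyperbolic identity $\cosh^2-\sinh^2=1$ and the positivity of all lengths to pin down signs. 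Once symmetry, negative diagonal, and strict diagonal dominance are in hand, negative definiteness follows immediately: a real symmetric matrix that is strictly diagonally dominant with negative diagonal entries is negative definite (this is the standard Gershgorin/diagonal-dominance criterion, the negative of a positive definite strictly diagonally dominant matrix with positive diagonal).

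The main obstacle I anticipate is the bookkeeping in the strict diagonal dominance inequality: after the chain rule introduces the asymmetric factors $2\coth\frac{l_{rs}}{2}$, one must verify that the weighted sum of off-diagonal magnitudes is strictly controlled by the diagonal term uniformly over the whole admissible region $W^{\diamond}$. This requires carefully combining the cosine law (\ref{2026}) and sine law (\ref{2027}) to rewrite the derivatives in a form where the inequality is manifest—most likely by showing that the dominance reduces to a positive combination of terms like $\cosh l_{rs}-1>0$ or to the geometric fact that each B-arc length is a strictly monotone function of the conformal factors. I would isolate this computation as the technical core, deferring the routine trigonometric simplifications, and expect that the sine law provides the key symmetrization that makes the dominance margin strictly positive rather than merely nonnegative.
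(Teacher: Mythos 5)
Your overall route coincides with the paper's: compute the entries by the chain rule through the edge lengths, use the sine law (\ref{2027}) to symmetrize the off-diagonal entries, establish strict diagonal dominance, and conclude negative definiteness from the Gershgorin/diagonal-dominance criterion for a symmetric matrix with negative diagonal. (Your factor $\frac{\partial l_{rs}}{\partial w_t}=2\coth\frac{l_{rs}}{2}$ agrees with the paper's $\frac{2\sinh l_{rs}}{\cosh l_{rs}-1}$.) However, one concrete step in your plan would fail as stated: you expect the off-diagonal entries $\frac{\partial\theta_i}{\partial w_j}$ to be positive, with dominance then "manifest." In fact the computation gives
\[
\frac{\partial\theta_{i}}{\partial w_{j}}=-\frac{2}{A}\,\frac{\cosh l_{jk}+\cosh l_{ki}-\cosh l_{ij}+1}{\cosh l_{ij}-1},\qquad A=\sinh\theta_{i}\sinh l_{ki}\sinh l_{ij}>0,
\]
so the sign of the off-diagonal entry is \emph{not} constant on $W^{\diamond}$: it is negative when $\cosh l_{ij}\le\cosh l_{jk}+\cosh l_{ki}+1$ (the typical situation) and positive when $\cosh l_{ij}$ dominates. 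Consequently the absolute values in $\bigl|\frac{\partial\theta_i}{\partial w_i}\bigr|-\bigl|\frac{\partial\theta_i}{\partial w_j}\bigr|-\bigl|\frac{\partial\theta_i}{\partial w_k}\bigr|$ cannot be resolved uniformly; the paper handles this with a three-way case analysis according to which (if any) of $\cosh l_{ij},\cosh l_{ki}$ exceeds the sum of the other two cosines plus $1$, obtaining a strictly positive margin in each case ($\frac{4}{A}\frac{\cosh l_{jk}\cosh l_{ki}+\cosh l_{ij}}{\cosh l_{ki}-1}$ or $\frac{4(\cosh l_{jk}+1)}{A}$, etc.). Your final step (symmetric $+$ strictly diagonally dominant $+$ negative diagonal $\Rightarrow$ negative definite) is correct and is exactly what the paper uses, so once you replace the single-sign expectation by this case split the argument goes through.
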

\begin{proof}
We begin by showing the Jacobian matrix is symmetric. Differentiating \eqref{2026} with respect to $l_{jk}$ gives
\begin{equation}\nonumber
\frac{\partial\theta_{i}}{\partial l_{jk}}=\frac{\sinh l_{jk}}{\sinh\theta_{i}\sinh l_{ki}\sinh l_{ij}}.
\end{equation}
What is more, we have
\begin{equation}\nonumber
\frac{\partial\theta_{i}}{\partial l_{ki}}=-\frac{\cosh l_{ij}+\cosh l_{jk}\cosh l_{ki}}{\sinh\theta_{i}\sinh l_{ij}\sinh^{2} l_{ki}}
\end{equation}
and
\begin{equation}\nonumber
\frac{\partial\theta_{i}}{\partial l_{ij}}=-\frac{\cosh l_{ki}+\cosh l_{jk}\cosh l_{ij}}{\sinh\theta_{i}\sinh l_{ki}\sinh^{2} l_{ij}}.
\end{equation}
Meanwhile, it follows from \eqref{123} that
\[
\frac{\partial l_{ij}}{\partial w_{j}}=\frac{2\sinh l_{ij}}{\cosh l_{ij}-1}.
\]
In addition,
\[
\frac{\partial l_{ki}}{\partial w_{j}}=0,\quad\frac{\partial l_{jk}}{\partial w_{j}}=\frac{2\sinh l_{jk}}{\cosh l_{jk}-1}.
\]
In light of the above formulas, we derive
\begin{equation}
\begin{aligned}
\nonumber
\frac{\partial\theta_{i}}{\partial w_{j}}&=\frac{\partial\theta_{i}}{\partial l_{jk}}\frac{\partial l_{jk}}{\partial w_{j}}+\frac{\partial\theta_{i}}{\partial l_{ki}}\frac{\partial l_{ki}}{\partial w_{j}}+\frac{\partial\theta_{i}}{\partial l_{ij}}\frac{\partial l_{ij}}{\partial w_{j}}\\
&=-\frac{2(\cosh l_{jk}+\cosh l_{ki}-\cosh l_{ij}+1)}{\sinh\theta_{i}\sinh l_{ki}\sinh l_{ij}(\cosh l_{ij}-1)}.
\end{aligned}
\end{equation}
Similarly,
\begin{equation}
\nonumber
\frac{\partial\theta_{j}}{\partial w_{i}}=-\frac{2(\cosh l_{jk}+\cosh l_{ki}-\cosh l_{ij}+1)}{\sinh\theta_{j}\sinh l_{jk}\sinh l_{ij}(\cosh l_{ij}-1)}.
\end{equation}
Applying \eqref{2027}, we easily obtain
\[
\sinh\theta_{j}\sinh l_{jk}\sinh l_{ij}=\sinh\theta_{i}\sinh l_{ki}\sinh l_{ij}:=A.
\]
As a result,
\begin{equation}\nonumber
\frac{\partial\theta_{i}}{\partial w_{j}}=\frac{\partial\theta_{j}}{\partial w_{i}}=-\frac{2}{A}\frac{\cosh l_{jk}+\cosh l_{ki}-\cosh l_{ij}+1}{\cosh l_{ij}-1}.
\end{equation}
That means the Jacobian matrix is symmetric.

Next we check that the Jacobian matrix is diagonally dominant. A routine computation gives
\[
\frac{\partial\theta_{i}}{\partial w_{i}}=\frac{-2}{A}\left(\frac{\cosh l_{ij}+\cosh l_{jk}\cosh l_{ki}}{\cosh l_{ki}-1}+\frac{\cosh l_{ki}+\cosh l_{jk}\cosh l_{ij}}{\cosh l_{ij}-1}\right)<0
\]
and
\[
\frac{\partial\theta_{i}}{\partial w_{k}}=-\frac{2}{A}\frac{\cosh l_{jk}+\cosh l_{ij}-\cosh l_{ki}+1}{\cosh l_{ki}-1}.
\]
Now we divide the proof into the following three cases:
\begin{itemize}
\item[(i)]$\cosh l_{ki}>\cosh l_{jk}+\cosh l_{ij}+1.$ Then
\begin{equation}
\begin{aligned}
\nonumber
\left|\frac{\partial\theta_{i}}{\partial w_{i}}\right|-\left|\frac{\partial\theta_{i}}{\partial w_{j}}\right|-\left|\frac{\partial\theta_{i}}{\partial w_{k}}\right|
&=-\frac{\partial\theta_{i}}{\partial w_{i}}+\frac{\partial\theta_{i}}{\partial w_{j}}-\frac{\partial\theta_{i}}{\partial w_{k}}\\
&=\frac{4}{A}\frac{\cosh l_{jk}\cosh l_{ki}+\cosh l_{ij}}{\cosh l_{ki}-1}\\
&>0.\\
\end{aligned}
\end{equation}
\item[(ii)] $\cosh l_{ij}>\cosh l_{jk}+\cosh l_{ki}+1.$\
Similar arguments to the first case yield
\[
\left|\frac{\partial\theta_{i}}{\partial w_{i}}\right|-\left|\frac{\partial\theta_{i}}{\partial w_{j}}\right|-\left|\frac{\partial\theta_{i}}{\partial w_{k}}\right|>0.
\]
\item[(iii)] $\cosh l_{ki}\leq\cosh l_{jk}+\cosh l_{ij}+1$ and $\cosh l_{ij}\leq\cosh l_{ki}+\cosh l_{jk}+1.$ We obtain
\begin{equation}
\begin{aligned}
\nonumber
\left|\frac{\partial\theta_{i}}{\partial w_{i}}\right|-\left|\frac{\partial\theta_{i}}{\partial w_{j}}\right|-\left|\frac{\partial\theta_{i}}{\partial w_{k}}\right|
&=-\frac{\partial\theta_{i}}{\partial w_{i}}+\frac{\partial\theta_{i}}{\partial w_{j}}+\frac{\partial\theta_{i}}{\partial w_{k}}\\
&=\frac{4(\cosh l_{jk}+1)}{A}\\
&>0.\\
\end{aligned}
\end{equation}
\end{itemize}
In summary, we always have
\[
\left|\frac{\partial\theta_{i}}{\partial w_{i}}\right|-\left|\frac{\partial\theta_{i}}{\partial w_{j}}\right|-\left|\frac{\partial\theta_{i}}{\partial w_{k}}\right|>0,
\]
which implies the Jacobian matrix is diagonally dominant.
Since $\frac{\partial\theta_{i}}{\partial w_{i}}<0$, it is easy to see the Jacobian matrix is negative definite. We thus finish the proof.
\end{proof}

To prove Theorem \ref{1234}, we need to describe how $\theta_{i}, \theta_{j}, \theta_{k}$ behave as $(w_{i},w_{j},w_{k})$ tends to the boundary of $W^{\diamond}$.
The following lemma was also obtained by Guo \cite{Guo}. As before, we include the proof for the sake of completeness.

\begin{lemma}\label{2.4}
Let $\theta_{i}, \theta_{j}, \theta_{k}$ be as above. Then
\begin{subequations}
\begin{align}
\lim_{(w_{i},w_{j},w_{k}) \to (+\infty,c_{1},c_{2})}&\theta_{i}=0, \label{MMa}\\
\lim_{(w_{i},w_{j},w_{k}) \to (+\infty,+\infty,c_{3})}&\theta_{i}=0,\label{MMb}\\
\lim_{(w_{i},w_{j},w_{k}) \to (+\infty,+\infty,+\infty)}&\theta_{i}=0,\label{MMc}\\
\lim_{(w_{i},w_{j},w_{k}) \to (c_{4},c_{5},c_{6})}&\theta_{i}=+\infty,\label{MMd}
\end{align}
\end{subequations}
where $c_{1},c_{2},c_{3},c_{4},c_{5},c_{6}$ are constants satisfying
\[
c_{1}+c_{2}\geq-\ln\cosh\frac{l_{jk}^{0}}{2}
\]
and
\[
c_{4}+c_{5}=-\ln\cosh\frac{l_{ij}^{0}}{2},\quad c_{5}+c_{6}\geq-\ln\cosh\frac{l_{jk}^{0}}{2},\quad c_{4}+c_{6}\geq-\ln\cosh\frac{l_{ki}^{0}}{2}.
\]
\end{lemma}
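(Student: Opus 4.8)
The plan is to reduce everything to the cosine law \eqref{2026} together with the substitution \eqref{123}, and to translate the limits in the conformal factors $w$ into limits in the lengths $l$. Concretely, I would set $x_{rs}=\cosh\frac{l_{rs}}{2}=e^{w_r+w_s}\cosh\frac{l^0_{rs}}{2}$, so that $x_{rs}$ is a fixed positive constant times $e^{w_r+w_s}$; this makes the asymptotics of each length transparent. The one algebraic preparation I would make is to rewrite \eqref{2026} in the form
\[
\cosh\theta_{i}=\coth l_{ki}\,\coth l_{ij}+\frac{\cosh l_{jk}}{\sinh l_{ki}\,\sinh l_{ij}},
\]
which cleanly separates a ``main'' term converging to $1$ from a ``remainder'' term.

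For the three limits \eqref{MMa}, \eqref{MMb}, \eqref{MMc} I would argue uniformly. In each case $w_i\to+\infty$, and since $w_i+w_j\to+\infty$ and $w_i+w_k\to+\infty$ we get $l_{ij},l_{ki}\to+\infty$; hence $\coth l_{ki}\,\coth l_{ij}\to1$. The heart of the matter is showing the remainder term tends to $0$. Using $\cosh l_{jk}=2x_{jk}^{2}-1\le 2x_{jk}^{2}$ and the elementary lower bound $\sinh l_{rs}=2x_{rs}\sqrt{x_{rs}^{2}-1}\ge \sqrt{2}\,x_{rs}^{2}$ (valid once $x_{rs}\ge\sqrt2$, which holds eventually), one finds
\[
\frac{\cosh l_{jk}}{\sinh l_{ki}\,\sinh l_{ij}}\le \frac{\cosh^{2}\frac{l^0_{jk}}{2}}{\cosh^{2}\frac{l^0_{ki}}{2}\,\cosh^{2}\frac{l^0_{ij}}{2}}\;e^{-4w_{i}}.
\]
The crucial point is that the $w_j$ and $w_k$ contributions cancel, leaving a bound depending only on $e^{-4w_i}$; this is exactly what makes the estimate uniform and lets the single hypothesis $w_i\to+\infty$ cover all three cases, including \eqref{MMb} and \eqref{MMc} where $l_{jk}\to+\infty$ as well. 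Thus $\cosh\theta_{i}\to1$, and since $\theta_i\ge0$ we conclude $\theta_{i}\to0$.

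For \eqref{MMd} I would return to the original form of \eqref{2026}. The equality constraint $c_{4}+c_{5}=-\ln\cosh\frac{l^0_{ij}}{2}$ forces $x_{ij}\to1$, i.e.\ $l_{ij}\to0$, while the two inequality constraints keep $w_j+w_k$ and $w_i+w_k$ finite, so $l_{jk}$ and $l_{ki}$ stay bounded. Since $\cosh\ge1$, the numerator $\cosh l_{jk}+\cosh l_{ki}\cosh l_{ij}\ge 2$ is bounded below, whereas the denominator $\sinh l_{ki}\,\sinh l_{ij}\to0$ because $\sinh l_{ij}\to0$ and $\sinh l_{ki}$ stays bounded. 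Hence $\cosh\theta_{i}\to+\infty$ and $\theta_{i}\to+\infty$.

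The step I expect to be the main obstacle is the remainder estimate in \eqref{MMb} and \eqref{MMc}, where all three lengths blow up and the remainder is an $\infty/\infty$ indeterminate form; the proof only works because the exponential rates conspire so that the bound reduces to $e^{-4w_i}$ uniformly in the other variables. A secondary point requiring care is the degenerate sub-case of \eqref{MMd} in which $c_{4}+c_{6}=-\ln\cosh\frac{l^0_{ki}}{2}$ as well, so that $l_{ki}\to0$ simultaneously; this is harmless here because the numerator bound $\ge 2$ does not depend on $l_{ki}$, but it is worth noting explicitly so that the denominator, not the numerator, is seen to drive the blow-up.
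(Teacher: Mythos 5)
Your proposal is correct and follows essentially the same route as the paper: the same splitting of the cosine law into a main term $\coth l_{ki}\coth l_{ij}\to 1$ and a remainder $\cosh l_{jk}/(\sinh l_{ki}\sinh l_{ij})$, the same translation of $w$-limits into $l$-limits via \eqref{123}, and the same lower bound on the numerator for \eqref{MMd}. The only difference is cosmetic: your uniform estimate $\cosh l_{jk}/(\sinh l_{ki}\sinh l_{ij})\le C\,e^{-4w_i}$ packages into one inequality the cancellation of $w_j,w_k$ that the paper exhibits by factoring out $e^{2w_j+2w_k}$ and treating \eqref{MMa} separately from \eqref{MMb}--\eqref{MMc}.
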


\begin{proof}
By the cosine law of right-angled hyperbolic hexagons, we write
\begin{equation}
\nonumber
\cosh\theta_{i}=X+Y,
\end{equation}
where
\[
X=\frac{\cosh l_{jk}}{\sinh l_{ki}\sinh l_{ij}},\quad Y=\frac{\cosh l_{ki}\cosh l_{ij}}{\sinh l_{ki}\sinh l_{ij}}.
\]
To simplify notations, set
\begin{equation}\nonumber
\rho_{jk}=2\cosh^{2}\frac{l_{jk}^{0}}{2}, \quad \rho_{ki}=2\cosh^{2}\frac{l_{ki}^{0}}{2}, \quad \rho_{ij}=2\cosh^{2}\frac{l_{ij}^{0}}{2}.
\end{equation}
Therefore, we rewrite formula \eqref{123} as
\begin{equation}
\label{717777777777778}
\cosh l_{jk}=\rho_{jk}e^{2w_{j}+2w_{k}}-1.
\end{equation}
Similarly,
\begin{equation}
\label{71777777777777}
\cosh l_{ki}=\rho_{ki}e^{2w_{i}+2w_{k}}-1,\quad\cosh l_{ij}=\rho_{ij}e^{2w_{i}+2w_{j}}-1.
\end{equation}
As $(w_{i},w_{j},w_{k}) \to (+\infty,c_{1},c_{2})$, we have
\[
\cosh l_{jk}\to\rho_{jk}e^{2c_{1}+2c_{2}}-1,\quad l_{ki}\to +\infty,\quad l_{ij}\to +\infty.
\]
It is easy to see
\[
X=\frac{\cosh l_{jk}}{\sinh l_{ki}\sinh l_{ij}}\to0,\, \quad Y=\frac{\cosh l_{ki}\cosh l_{ij}}{\sinh l_{ki}\sinh l_{ij}}\to1.
\]
Consequently, $\cosh\theta_{i}\to1$, which implies (\ref{MMa}).

Next we prove (\ref{MMb}) and (\ref{MMc}) simultaneously. Combining (\ref{717777777777778}) and (\ref{71777777777777}) gives
\begin{equation}
\begin{aligned}
\label{098765}
X&=\frac{\rho_{jk}e^{2w_{j}+2w_{k}}-1}{\sqrt{\left[(\rho_{ki}e^{2w_{k}+2w_{i}}-1)^{^{2}}-1\right] \big[(\rho_{ij}e^{2w_{i}+2w_{j}}-1)^{^{2}}-1\big]}}\\
&=\frac{\rho_{jk}e^{2w_{j}+2w_{k}}-1}{e^{2w_{j}+2w_{k}}}\frac{1}{\sqrt{\rho_{ki}\rho_{ij}}}
\frac{1}{\sqrt{\rho_{ki}e^{4w_{i}}-2e^{2w_{i}-2w_{k}}}}
\frac{1}{\sqrt{\rho_{ij}e^{4w_{i}}-2e^{2w_{i}-2w_{j}}}}.
\end{aligned}
\end{equation}
As $(w_{i},w_{j},w_{k}) \to (+\infty,+\infty ,c_{2})$ or $(+\infty,+\infty,+\infty)$,
we have
\[
\frac{\rho_{jk}e^{2w_{j}+2w_{k}}-1}{e^{2w_{j}+2w_{k}}}\to\rho_{jk}.
\]
In addition,
\[
\frac{1}{\sqrt{{\rho_{ki}e^{4w_{i}}-2e^{2w_{i}-2w_{k}}}}}=\frac{1}{\sqrt{e^{2w_{i}}(\rho_{ki}e^{2w_{i}}-2e^{-2w_{k}}})}\to 0
\]
and
\[
\frac{1}{\sqrt{\rho_{ij}e^{4w_{i}}-2e^{2w_{i}-2w_{j}}}}\to 0.
\]
As a result, $X\to0$. Meanwhile, by (\ref{717777777777778}) and (\ref{71777777777777}), we obtain
\[
l_{ki}\to +\infty,\quad l_{ij}\to +\infty,
\]
which yields $Y\to1$. It follows that $\cosh\theta_{i}\to1$. Thus we derive (\ref{MMb}) and (\ref{MMc}).

It remains to prove (\ref{MMd}). Since $(w_{i},w_{j},w_{k}) \to (c_{4},c_{5},c_{6})$ with
\[
c_{4}+c_{5}=-\ln\cosh\frac{l_{ij}^{0}}{2},
\]
we immediately get
\[
l_{ij}\to 0.
\]
Meanwhile,
\[
\sinh l_{ki}\to \sqrt{\rho_{ki}^{2}e^{4c_{4}+4c_{6}}-2\rho_{ki}e^{2c_{4}+2c_{6}}}.
\]
Hence
\[
X\geq\frac{1}{\sinh l_{ki}\sinh l_{ij}}\to+\infty.
\]
Noting that $Y\geq0$, we have $\cosh\theta_{i}=X+Y\to+\infty$, which yields $\theta_{i}\to+\infty$.
\end{proof}

The above lemma does not list all cases that $(w_{i},w_{j},w_{k})$ tends to the boundary of $W^{\diamond}$. In fact, this is the main obstruction why Guo \cite{Guo} did not give a complete proof of Theorem \ref{1234}.
Fortunately, we have the following lemma which complements the ignored case.
\begin{lemma}\label{2.3}Given $\theta_{i}, \theta_{j}, \theta_{k}$ as above, we have
\begin{equation}\label{5}
\lim_{(w_{i},w_{j},w_{k}) \to (-\infty,+\infty,+\infty)}\theta_{i}=+\infty.
\end{equation}
\end{lemma}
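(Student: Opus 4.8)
The plan is to reduce the whole statement to controlling a single term of the hyperbolic cosine law \eqref{2026} from below. Writing that law as $\cosh\theta_i=X+Y$ with
\[
X=\frac{\cosh l_{jk}}{\sinh l_{ki}\sinh l_{ij}},\qquad Y=\frac{\cosh l_{ki}\cosh l_{ij}}{\sinh l_{ki}\sinh l_{ij}},
\]
I observe that $Y>0$ always, so it suffices to prove $X\to+\infty$. The reason this case is genuinely different from those treated in Lemma \ref{2.4} is that, under $(w_i,w_j,w_k)\to(-\infty,+\infty,+\infty)$, the edge lengths $l_{ki}$ and $l_{ij}$ have no determinate limiting behaviour: depending on the relative rates at which $w_i\to-\infty$ and $w_j,w_k\to+\infty$ (subject only to admissibility), each of $\sinh l_{ki},\sinh l_{ij}$ may tend to $0$, to a finite constant, or to $+\infty$. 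Hence one cannot pass to the limit factor by factor as in Lemma \ref{2.4}; a lower bound uniform over all admissible approaches is what is required.

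To obtain such a bound I would first record, from \eqref{717777777777778} and \eqref{71777777777777}, the abbreviations $P=\rho_{jk}e^{2w_j+2w_k}$, $Q=\rho_{ki}e^{2w_k+2w_i}$ and $R=\rho_{ij}e^{2w_i+2w_j}$, so that $\cosh l_{jk}=P-1$, $\cosh l_{ki}=Q-1$, $\cosh l_{ij}=R-1$, where $\rho_{jk},\rho_{ki},\rho_{ij}>0$ are the fixed constants from Lemma \ref{2.4}. Since admissibility forces $P,Q,R>2$, the elementary estimate $\sinh l_{ki}=\sqrt{(Q-1)^2-1}=\sqrt{Q^2-2Q}<Q$, and likewise $\sinh l_{ij}<R$, give
\[
X=\frac{P-1}{\sinh l_{ki}\sinh l_{ij}}>\frac{P-1}{QR}.
\]
The crucial algebraic point is that the exponents combine favourably: since $QR=\rho_{ki}\rho_{ij}e^{4w_i+2w_j+2w_k}=\tfrac{\rho_{ki}\rho_{ij}}{\rho_{jk}}e^{4w_i}P$, one obtains
\[
X>\frac{P-1}{QR}=\frac{\rho_{jk}}{\rho_{ki}\rho_{ij}}\,e^{-4w_i}\,\frac{P-1}{P}>\frac{\rho_{jk}}{2\rho_{ki}\rho_{ij}}\,e^{-4w_i},
\]
where the last inequality uses $P>2$, hence $(P-1)/P>\tfrac12$.

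Finally, as $w_i\to-\infty$ the factor $e^{-4w_i}\to+\infty$, so $X\to+\infty$, and therefore $\cosh\theta_i>X\to+\infty$, which yields $\theta_i\to+\infty$, as claimed. I would also note in passing that within $W^{\diamond}$ the hypotheses $w_j,w_k\to+\infty$ are automatic once $w_i\to-\infty$, because admissibility ($w_i+w_j>-\ln\cosh\frac{l^0_{ij}}{2}$ and $w_i+w_k>-\ln\cosh\frac{l^0_{ki}}{2}$) forces $w_j,w_k\geq -w_i-\text{const}$. The only real difficulty, as indicated above, is the indeterminacy of $l_{ki}$ and $l_{ij}$; the whole proof turns on replacing the naive factorwise limit by the single robust lower bound $X>\frac{\rho_{jk}}{2\rho_{ki}\rho_{ij}}e^{-4w_i}$, whose divergence is driven entirely by the $e^{-4w_i}$ coming from $w_i\to-\infty$.
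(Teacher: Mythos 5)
Your proof is correct and follows essentially the same route as the paper's: both reduce the claim to showing $X\to+\infty$ in the decomposition $\cosh\theta_{i}=X+Y$ from \eqref{2026}, and both rest on the same exponent bookkeeping supplied by \eqref{717777777777778}--\eqref{71777777777777}. The only difference is presentational: the paper passes to the limit factor by factor in the exact factorization \eqref{098765}, whereas you package the same mechanism as the explicit uniform lower bound $X>\frac{\rho_{jk}}{2\rho_{ki}\rho_{ij}}e^{-4w_{i}}$ obtained from $\sinh l<\cosh l+1$ and $P,Q,R>2$.
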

\begin{proof}
As before, it suffices to show $X\to+\infty$ as $(w_{i},w_{j},w_{k}) \to (-\infty,+\infty,+\infty)$. In this case,  one is ready to see
\[
\frac{\rho_{jk}e^{2w_{j}+2w_{k}}-1}{e^{2w_{j}+2w_{k}}}\to\rho_{jk}.
\]
Moreover,
\[
\frac{1}{\sqrt{\rho_{ki}e^{4w_{i}}-2e^{2w_{i}-2w_{k}}}}\to+\infty,\quad\frac{1}{\sqrt{\rho_{ij}e^{4w_{i}}-2e^{2w_{i}-2w_{j}}}}\to+\infty.
\]
In view of (\ref{098765}), we get $X\to+\infty$, which concludes the lemma.
\end{proof}

\section{Proof of Theorem 1.1} \label{2021.10.4}\noindent
Before giving the proof of Theorem \ref{1234}, we establish some preparatory results.
\begin{lemma}\label{L-1-3}
The Jacobian matrix $L$ of $(B_{1},B_{2},\cdots,B_{n})$ in terms of $(w_{1},w_{2},\cdots,w_{n})$ is symmetric, diagonally dominant and negative definite.
\end{lemma}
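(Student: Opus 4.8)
The plan is to realize the global Jacobian $L$ as an assembly of the local $3\times 3$ Jacobians furnished by Lemma~\ref{L-1-1}. Since $B_i=\sum_{f\in F_i}\theta_i^f$ and each B-arc length $\theta_i^f$ depends only on the three conformal factors attached to the boundary components incident to the face $f$, the chain rule gives
\[
L_{ij}=\frac{\partial B_i}{\partial w_j}=\sum_{f\in F}\frac{\partial\theta_i^f}{\partial w_j},
\]
where each summand is understood to vanish unless both $i$ and $j$ are incident to $f$. Writing $J^f$ for the $3\times 3$ Jacobian of $(\theta_i^f,\theta_j^f,\theta_k^f)$ in $(w_i,w_j,w_k)$ associated to a face $f=ijk$, and $\iota_f J^f$ for its extension to an $n\times n$ matrix obtained by placing its entries in the rows and columns indexed by the boundary components of $f$ and setting all other entries to zero, this reads $L=\sum_{f\in F}\iota_f J^f$. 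I would then deduce each of the three asserted properties of $L$ from the corresponding property of the blocks $J^f$ proved in Lemma~\ref{L-1-1}.

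Symmetry is immediate: Lemma~\ref{L-1-1} gives $\partial\theta_i^f/\partial w_j=\partial\theta_j^f/\partial w_i$ for every face, so each $\iota_f J^f$ is symmetric and hence so is their sum $L$. For diagonal dominance I would argue one row at a time. Because $\partial\theta_i^f/\partial w_i<0$ for every $f$, there is no cancellation on the diagonal, so $|L_{ii}|=\sum_{f\in F_i}|\partial\theta_i^f/\partial w_i|$. For the off-diagonal mass in row $i$, the triangle inequality gives
\[
\sum_{j\neq i}|L_{ij}|\le\sum_{f\in F_i}\Big(\big|\tfrac{\partial\theta_i^f}{\partial w_j}\big|+\big|\tfrac{\partial\theta_i^f}{\partial w_k}\big|\Big),
\]
where for each $f\in F_i$ the symbols $j,k$ denote the two remaining boundary components of $f$. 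Applying the face-level estimate $|\partial\theta_i^f/\partial w_i|>|\partial\theta_i^f/\partial w_j|+|\partial\theta_i^f/\partial w_k|$ of Lemma~\ref{L-1-1} term by term and summing over $f\in F_i$ yields $\sum_{j\neq i}|L_{ij}|<|L_{ii}|$, i.e. strict diagonal dominance.

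Negative definiteness then follows from the standard fact that a real symmetric matrix which is strictly diagonally dominant with negative diagonal entries is negative definite; one can invoke Gershgorin's circle theorem, or simply repeat the one-line argument used at the end of the proof of Lemma~\ref{L-1-1}. Since $L_{ii}<0$ and $L$ has already been shown to be symmetric and strictly diagonally dominant, this completes the argument.

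I expect the main obstacle to be bookkeeping rather than analysis. The clean assembly $L=\sum_{f}\iota_f J^f$ and the row-wise estimate above are transparent when the ideal triangulation is simplicial, so that each face $f=ijk$ meets three distinct boundary components. The point requiring care is the non-simplicial case, in which a single ideal face may be incident to the same boundary component along two or three of its B-arcs; then one index can appear more than once among the ``vertices'' of $f$, and the contribution of such a face to a single entry of $L$ becomes a sum of several of the partial derivatives computed in Lemma~\ref{L-1-1}. In that situation I would verify that the face-level diagonal dominance still strictly dominates the corresponding merged off-diagonal contributions after the identifications, so that the strict inequality in the row-sum estimate is preserved. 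Handling this degenerate incidence carefully is the only delicate step; in all other respects the lemma is an immediate consequence of Lemma~\ref{L-1-1}.
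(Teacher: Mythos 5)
Your proposal is correct and follows essentially the same route as the paper: assemble $L$ as a sum of face-level contributions, transfer symmetry and the strict row estimate $\bigl|\partial\theta_i^f/\partial w_i\bigr|>\bigl|\partial\theta_i^f/\partial w_j\bigr|+\bigl|\partial\theta_i^f/\partial w_k\bigr|$ from Lemma~\ref{L-1-1} term by term over $f\in F_i$, and conclude negative definiteness from symmetry, strict diagonal dominance and $L_{ii}<0$. Your added caution about non-simplicial incidences is a sensible refinement the paper glosses over, but it does not change the argument.
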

\begin{proof}
Suppose $i\neq j$. If there exists no ideal edge between the boundary components $i,j$, it is easy to see
\[
\frac{\partial B_{i}}{\partial w_{j}}=\frac{\partial B_{j}}{\partial w_{i}}=0.
\]
Otherwise, there exists an ideal edge between $i,j$.  Let $f_{1}, f_{2}$ be the two ideal faces adjacent to the edge $ij$.
By Lemma \ref{L-1-1}, a simple computation gives
\[
\frac{\partial B_{i}}{\partial w_{j}}=\frac{\partial\theta_{i}^{f_{1}}}{\partial w_{j}}+\frac{\partial\theta_{i}^{f_{2}}}{\partial w_{j}}=\frac{\partial\theta_{j}^{f_{1}}}{\partial w_{i}}+\frac{\partial\theta_{j}^{f_{2}}}{\partial w_{i}}=\frac{\partial B_{j}}{\partial w_{i}}.
\]
Thus the Jacobian matrix $L$ is symmetric. Moreover, notice that
\begin{equation}
\begin{aligned}
\nonumber
\left|\frac{\partial B_{i}}{\partial w_{i}}\right|-\sum_{j\neq i}^{n}\left|\frac{\partial B_{i}}{\partial w_{j}}\right|
&=\left|\frac{\partial B_{i}}{\partial w_{i}}\right|-\sum_{j\neq i}^{n}\left|\frac{\partial B_{j}}{\partial w_{i}}\right|\\
&\geq\sum_{f\in F_{i}}\left(\left|\frac{\partial\theta_{i}^{f}}{\partial w_{i}}\right|-\left|\frac{\partial\theta_{j}^{f}}{\partial w_{i}}\right|-\left|\frac{\partial\theta_{k}^{f}}{\partial w_{i}}\right|\right).\\
\end{aligned}
\end{equation}
To show $L$ is diagonally dominant, we need to check
\[
\left|\frac{\partial\theta_{i}^{f}}{\partial w_{i}}\right|-\left|\frac{\partial\theta_{j}^{f}}{\partial w_{i}}\right|-\left|\frac{\partial\theta_{k}^{f}}{\partial w_{i}}\right|>0,
\]
which is also asserted by Lemma \ref{L-1-1}. Finally, taking into consideration that
\[
\frac{\partial B_{i}}{\partial w_{i}}=\sum_{f\in F_{i}}\frac{\partial\theta_{i}^{f}}{\partial w_{i}}<0,
\]
we easily prove $L$ is negative definite. 
\end{proof}

Let us consider the 1-form $\alpha=\sum_{i=1}^nB_{i}dw_{i}.$ A simple computation yields
\[
d\alpha=\sum_{i=1}^n\sum_{j=1}^n\Big(\frac{\partial B_{i}}{\partial w_{j}}-\frac{\partial B_{j}}{\partial w_{i}}\Big)dw_{i}\wedge dw_{j}=0.
\]
Thus $\alpha$ is closed. Recall that $W$ is a convex set. By Poincar\'{e}'s Lemma, for any $w(0) \in W$, the following function
\begin{equation} \nonumber
\Phi(w)=-\int_{w(0)}^{w} \sum_{i=1}^n B_{i}dw_{i},
\end{equation}
is well-defined. Moreover, we have the following property. 

\begin{lemma}
$\Phi(w)$ is a strictly convex function in $W$.
\end{lemma}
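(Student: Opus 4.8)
The plan is to show that the Hessian of $\Phi(w)$ is positive definite at every point of the convex set $W$, which immediately yields strict convexity. Since $\Phi$ is defined as the integral $-\int B_i\,dw_i$, its gradient is $\nabla\Phi = -(B_1,\dots,B_n)$, so the Hessian of $\Phi$ is
\begin{equation}\nonumber
\mathrm{Hess}\,\Phi = -\left(\frac{\partial B_i}{\partial w_j}\right)_{i,j} = -L,
\end{equation}
where $L$ is precisely the Jacobian matrix studied in Lemma \ref{L-1-3}.

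The key observation is that Lemma \ref{L-1-3} has already done the heavy lifting: it establishes that $L$ is symmetric, diagonally dominant, and negative definite. Therefore $\mathrm{Hess}\,\Phi = -L$ is symmetric and positive definite throughout $W$. First I would note that $\Phi$ is smooth on $W$, since each $B_i = \sum_{f\in F_i}\theta_i^f$ is a smooth function of $w$ by the discussion preceding Lemma \ref{L-1-1}, and the integral defining $\Phi$ is well-defined by the preceding application of Poincar\'{e}'s Lemma on the convex domain $W$. Then I would compute $\partial\Phi/\partial w_i = -B_i$ directly from the definition (this is where closedness of $\alpha$ guarantees path-independence, making the partial derivatives unambiguous), and differentiate once more to identify the Hessian with $-L$.

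To conclude, I would invoke the standard fact that a $C^2$ function on a convex open set whose Hessian is positive definite everywhere is strictly convex. Because $L$ is negative definite at every $w\in W$ by Lemma \ref{L-1-3}, the matrix $-L$ is positive definite at every such point, and strict convexity of $\Phi$ on $W$ follows at once.

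I do not anticipate any genuine obstacle here, since the substantive analytic and algebraic content—symmetry and negative definiteness of $L$—is entirely contained in the already-proven Lemma \ref{L-1-3}. The only point requiring a little care is the bookkeeping: confirming that the sign conventions line up so that the negative definiteness of $L$ translates into positive definiteness of the Hessian of $\Phi$, and making explicit that differentiating the line integral recovers $-B_i$ as the $i$-th partial derivative. This is routine given that $\alpha$ is closed and $W$ is convex, so the proof is essentially a short corollary of Lemma \ref{L-1-3}.
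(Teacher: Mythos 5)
Your proposal is correct and follows essentially the same route as the paper: identify the Hessian of $\Phi$ with $-L$ and invoke the negative definiteness of $L$ from Lemma \ref{L-1-3} to conclude strict convexity. The extra remarks on smoothness and path-independence are fine but not needed beyond what the paper already records.
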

\begin{proof}
Note that the Hessian of $\Phi(w)$ is equal to $-L$, which is positive definite by Lemma \ref{L-1-3}. Therefore, $\Phi(w)$ is strictly convex.
\end{proof}

\begin{proposition}
The map $\psi$ is injective.
\end{proposition}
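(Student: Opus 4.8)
The plan is to exploit the strictly convex potential $\Phi$ just constructed, using the standard fact that the gradient of a strictly convex function is injective. By definition $\Phi(w)=-\int_{w(0)}^{w}\sum_{i=1}^n B_i\,dw_i$, so its gradient is $\nabla\Phi(w)=-\big(B_1,\ldots,B_n\big)=-\psi(w)$. Hence injectivity of $\psi$ is equivalent to injectivity of the gradient map $\nabla\Phi$, and I would derive the latter from strict convexity rather than proving it by hand from the cosine/sine laws.

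Concretely, I would argue by contradiction. Suppose $\psi(w)=\psi(w')$ for two distinct points $w,w'\in W$. Since $W$ is convex, the segment $w(t)=w'+t(w-w')$, $t\in[0,1]$, lies entirely in $W$, so the one-variable restriction $g(t)=\Phi(w(t))$ is defined on $[0,1]$. By the chain rule,
\[
g'(t)=\nabla\Phi\big(w(t)\big)\cdot(w-w')=-\,\psi\big(w(t)\big)\cdot(w-w').
\]
The key step is that $g$ is strictly convex on $[0,1]$: since the Hessian of $\Phi$ equals $-L$, which is positive definite by Lemma \ref{L-1-3}, and since the fixed direction $w-w'$ is nonzero, one has $g''(t)=(w-w')^{\top}(-L)(w-w')>0$. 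Consequently $g'$ is strictly increasing, so $g'(0)<g'(1)$. On the other hand $g'(0)=-\psi(w')\cdot(w-w')$ and $g'(1)=-\psi(w)\cdot(w-w')$, and the hypothesis $\psi(w)=\psi(w')$ forces $g'(0)=g'(1)$, a contradiction. Therefore $w=w'$, which proves $\psi$ is injective.

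I do not expect any genuine obstacle here, because the analytic heart of the matter—the negative definiteness of $L$ and hence the strict convexity of $\Phi$—has already been secured. The only points deserving care are that the whole segment joining $w$ and $w'$ remains in the domain (which is precisely why the convexity of $W$, established earlier, is invoked) and that the direction $w-w'$ is nonzero, so that positive definiteness of $-L$ genuinely yields $g''>0$ rather than merely $g''\geq 0$. An equivalent and equally short formulation would be to invoke strict monotonicity of the gradient directly, namely $\big(\nabla\Phi(w)-\nabla\Phi(w')\big)\cdot(w-w')>0$ for $w\neq w'$, which is the integrated form of the same computation.
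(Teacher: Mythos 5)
Your proposal is correct and follows essentially the same route as the paper: the paper also deduces injectivity from the fact that $\psi$ is (up to sign) the gradient of the strictly convex function $\Phi$, citing a standard lemma for the injectivity of such gradient maps, whereas you simply prove that lemma inline via the restriction of $\Phi$ to segments. The argument is sound, including the two points of care you flag (convexity of $W$ and $w-w'\neq 0$).
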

\begin{proof}\label{20220227}
Since $\psi$ is the gradient map of the strictly convex $\Phi(w)$, the statement follows from the following Lemma \ref{202158}.
\end{proof}

The lemma below is a standard result in analysis. See e.g. \cite{GHZ,GHZ2} for a proof.
\begin{lemma}
\label{202158}
Suppose $\Omega\subset\mathbb{R}^{n}$ is an open convex set and $h : \Omega\to\mathbb{R}$ is a strictly convex smooth function. Then the gradient map $\nabla h: \Omega\to\mathbb{R}^{n}$ is injective.
\end{lemma}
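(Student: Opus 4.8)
The plan is to argue by contradiction, reducing the multivariable statement to the one-dimensional theory of strictly convex functions by restricting $h$ to line segments. Suppose, for contradiction, that there exist distinct points $x,y\in\Omega$ with $\nabla h(x)=\nabla h(y)$. Since $\Omega$ is convex, the whole segment $\{x+t(y-x):t\in[0,1]\}$ lies in $\Omega$, so I can define the smooth function $g:[0,1]\to\mathbb{R}$ by $g(t)=h(x+t(y-x))$.

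The first key step is to observe that $g$ inherits strict convexity from $h$: for $0\le s<t\le 1$ and $\lambda\in(0,1)$, the point $(1-\lambda)(x+s(y-x))+\lambda(x+t(y-x))$ is again of the form $x+u(y-x)$, and the strict convexity of $h$ together with $x+s(y-x)\neq x+t(y-x)$ (which holds because $x\neq y$) yields the strict Jensen inequality for $g$. Hence $g$ is a smooth, strictly convex function of one variable, and therefore its derivative $g'$ is strictly increasing on $[0,1]$.

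The second step is the chain-rule computation $g'(t)=\langle\nabla h(x+t(y-x)),\,y-x\rangle$, which gives $g'(0)=\langle\nabla h(x),\,y-x\rangle$ and $g'(1)=\langle\nabla h(y),\,y-x\rangle$. Under the assumption $\nabla h(x)=\nabla h(y)$ these two numbers coincide, contradicting $g'(0)<g'(1)$, which is forced by the strict monotonicity of $g'$. This contradiction shows that no such pair $x\neq y$ can exist, so $\nabla h$ is injective.

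There is essentially no serious obstacle here, as the statement is classical; the only point requiring a little care is the verification that strict convexity of $h$ genuinely descends to the one-variable restriction $g$, and that, for a smooth function of one variable, strict convexity is equivalent to strict monotonicity of the derivative. Both are standard facts from elementary convex analysis. In fact the whole argument can be compressed into the single gradient-monotonicity inequality $\langle\nabla h(y)-\nabla h(x),\,y-x\rangle>0$, valid for all $x\neq y$ when $h$ is smooth and strictly convex; its vanishing under the hypothesis $\nabla h(x)=\nabla h(y)$ is exactly the contradiction sought.
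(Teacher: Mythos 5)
Your proof is correct, and it is the standard argument; the paper itself does not supply a proof of this lemma, merely citing it as an elementary fact from \cite{GHZ,GHZ2}, where essentially the same reasoning (restriction of $h$ to the segment joining $x$ and $y$, strict monotonicity of the derivative of the one-variable restriction, equivalently the strict gradient-monotonicity inequality $\langle\nabla h(y)-\nabla h(x),\,y-x\rangle>0$) is used. No gaps: the two points you flag --- that strict convexity descends to the restriction $g$ and that a smooth strictly convex function of one variable has strictly increasing derivative --- are both standard and handled correctly.
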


We are now ready to prove Theorem \ref{1234} based on the main ideas of Guo's original proof \cite{Guo}. As mentioned before, there is a gap in Guo's proof regarding to properness of the map $\psi$ (see Remark \ref{20212021}). Here we fix the gap and reprove the other parts to make the overall proof clearer.

\medskip
\begin{proof}[Proof of Theorem \ref{1234}]
We have the following claims:
\begin{itemize}
\item[$(i)$] $\psi$ is smooth and $\psi(W)\subset\mathbb{R}_{+}^{n}$. It is straightforward.
\item[$(ii)$] $\psi$ is injective. This has been proved in Proposition 3.3.
\item[$(iii)$] $\psi: W\to\mathbb{R}_{+}^{n}$ is proper.
It suffices to verify that some $B_{i}$ became infinity or zero as $w$ tends to the boundary of $W$.
Let $\{w^{(m)}\}\subset W$ be a sequence approaching the boundary. We divide the situation into the following cases:
\begin{itemize}
\item[$(a)$] $w^{(m)}$ approaches the $-\infty$-boundary of $W$.
Precisely, there exists $i\in \{1,2,\cdots,n\}$ satisfying $w_{i}^{(m)}\to-\infty$. Since $\{w^{(m)}\}\subset W$, for every boundary component $r$ adjacent to $i$, we have
\[
w_{i}^{(m)}+w_{r}^{(m)}>-\ln\cosh\frac{l_{ir}^{0}}{2}.
\]
Hence $w_{r}^{(m)}\to+\infty$.
Due to Lemma \ref{2.3}, for each $f\in F_{i}$ we have $(\theta_{i}^{f})^{(m)}\to+\infty$, which implies $B_{i}^{(m)}\to+\infty$.
\item[$(b)$] $w^{(m)}$ approaches the $+\infty$-boundary of $W$. Namely, there exists $i\in \{1,2,\cdots,n\}$ such that $w_{i}^{(m)}\to+\infty$.
For each $f\in F_{i}$, it follows from formulas \eqref{MMa}-\eqref{MMc} in Lemma \ref{2.4} that $(\theta_{i}^{f})^{(m)}\to0$. Thus
\[
B_{i}^{(m)}=\sum_{f\in F_{i}}(\theta_{i}^{f})^{(m)}\to0.
\]
\item[$(c)$] $w^{(m)}$ approaches the finite boundary of $W$. In other words, there exists an ideal face $f=ijk$ subject to
\[
\left(w_{i}^{(m)},w_{j}^{(m)},w_{k}^{(m)}\right) \to (c_{4},c_{5},c_{6}),
\]
where $c_{4},c_{5},c_{6}$ satisfy
\[
c_{4}+c_{5}=-\ln\cosh\frac{l_{ij}^{0}}{2}, \,c_{5}+c_{6}\geq-\ln\cosh\frac{l_{jk}^{0}}{2},\,c_{4}+c_{6}\geq-\ln\cosh\frac{l_{ki}^{0}}{2}.
\]
By formula \eqref{MMd}, we get $(\theta_{i}^{f})^{(m)}\to+\infty$. Consequently, $B_{i}^{(m)}\to+\infty$.
\end{itemize}
\end{itemize}
By Brouwer's theorem on invariance of domain, the former two claims imply that $\psi(W)$ is a non-empty open set in $\mathbb{R}_{+}^{n}$. Combining with the third claim, $\psi(W)$ is both open and closed in $\mathbb{R}_{+}^{n}$. Since $\mathbb{R}_{+}^{n}$ is connected, $\psi(W)=\mathbb{R}_{+}^{n}$.
\end{proof}

\begin{remark}\label{20212021}
In proving claim $(iii)$, Guo \cite{Guo} did not analyze case $(a)$.
We complement the ignored part based on Lemma \ref{2.3}.
\end{remark}

\section{Combinatorial Yamabe flow}\label{2021123}
To show Theorem \ref{123456}, let us introduce some functions motivated by Lyapunov theory \cite{CYR,BK,T3}. Recall that $\Phi(w)=-\int_{c}^{w} \sum_{i=1}^n B_{i}dw_{i}$. We set
\[
\Psi(w)=\Phi(w)+\sum_{i=1}^n b_{i}w_{i}.
\]
Because $\Phi$ is strictly convex, $\Psi$ is also a strictly convex function in $W$.
Applying Theorem \ref{1234}, we can find a unique point $w^{\ast}\in W$ such that $B_{i}(w^{\ast})=b_{i}$. It follows that
\[
\frac{\partial\Psi}{\partial w_{i}} \,\bigg|_{w=w^{\ast}}=-B_{i}(w^{\ast})+b_{i}=0.
\]
Hence $w^{\ast}$ is a critical point of $\Psi$.
We now consider the following function
\begin{equation} \nonumber
\Lambda(w)=\Psi(w)-\Psi(w^\ast)+C(w),
\end{equation}
where
\[
C(w)=\sum_{i=1}^n(B_{i}-b_{i})^2.
\]
\begin{lemma} \label{20211224}
$\Lambda(w)\to+\infty$ as $w$ approaches the boundary of $W$.
\end{lemma}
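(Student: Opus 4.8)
The plan is to use that $\Lambda$ is a sum of two nonnegative quantities that blow up on complementary portions of $\partial W$. Since the preceding lemma shows $\Psi$ is strictly convex and $\nabla\Psi(w^\ast)=0$, the point $w^\ast$ is the global minimum of $\Psi$, so $\Psi(w)-\Psi(w^\ast)\ge 0$; and $C(w)=\sum_i(B_i-b_i)^2\ge 0$ trivially. Hence $\Lambda\ge C$ and $\Lambda\ge\Psi-\Psi(w^\ast)$ everywhere on $W$. I would argue by contradiction: suppose $\Lambda(w^{(m)})$ stays bounded along some sequence $w^{(m)}\to\partial W$. From $\Lambda\ge C$ every $B_i(w^{(m)})$ is then bounded. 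Invoking the boundary case analysis from the proof of Theorem \ref{1234}, a sequence with all $B_i$ bounded cannot be in case $(a)$ (a coordinate $w_i\to-\infty$ forces $B_i\to+\infty$ by Lemma \ref{2.3}) nor in case $(c)$ (a collapsing edge forces $B_i\to+\infty$ by \eqref{MMd}). After passing to a subsequence along which each coordinate converges in $(-\infty,+\infty]$, we are thus in case $(b)$: the set $S=\{i:w_i^{(m)}\to+\infty\}$ is nonempty, no coordinate tends to $-\infty$, and the coordinates outside $S$ converge to finite limits.

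It then suffices to show $\Psi(w^{(m)})\to+\infty$, contradicting boundedness of $\Lambda$. For this I would combine convexity with the midpoint $m^{(m)}=\tfrac12\big(w^{(m)}+w^\ast\big)$. Setting $g(t)=\Psi\big(w^\ast+t(w^{(m)}-w^\ast)\big)$, convexity makes $g'$ nondecreasing with $g'(0)=0$, so
\[
\Psi(w^{(m)})-\Psi(w^\ast)=\int_0^1 g'(t)\,dt\ge\tfrac12\,g'\big(\tfrac12\big)=\tfrac12\sum_{i=1}^n\big(b_i-B_i(m^{(m)})\big)\big(w_i^{(m)}-w_i^\ast\big).
\]
The reason for evaluating at the midpoint is that, because $w^\ast$ is interior, every admissibility constraint $w_i+w_j>-\ln\cosh(l^0_{ij}/2)$ holds at $m^{(m)}$ with a gap bounded below uniformly in $m$; hence no edge collapses at $m^{(m)}$ and no coordinate of $m^{(m)}$ tends to $-\infty$. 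By the cosine law \eqref{2026}, $\cosh\theta_i^f$ can blow up only through a collapsing edge, so all angles $\theta_i^f(m^{(m)})$, and therefore all $B_i(m^{(m)})$, stay bounded; moreover for $i\in S$ the slot $m_i^{(m)}\to+\infty$, so \eqref{MMa}–\eqref{MMc} give $B_i(m^{(m)})\to 0$. Splitting the sum at $S$, the terms with $i\notin S$ are bounded, while each term with $i\in S$ behaves like $b_i\big(w_i^{(m)}-w_i^\ast\big)\to+\infty$ with $b_i>0$; thus $g'(\tfrac12)\to+\infty$ and $\Psi(w^{(m)})\to+\infty$, the desired contradiction.

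I expect the main obstacle to be exactly this case $(b)$ estimate: controlling $\Phi$ (equivalently its gradient $-B$) when several coordinates escape to $+\infty$ simultaneously, since a crude Lipschitz bound only says $\Phi$ decreases at a rate comparable to $|B|$, which need not lose to the linear term $\sum_i b_i w_i$. The midpoint device is what overcomes this, turning "approach to $\partial W$'' into a uniform interior gap that simultaneously bounds $B$ off the escaping directions and drives $B\to 0$ along them, so that the strictly positive prescribed data $b_i$ prevail. The remaining points—passing to subsequences, and justifying that $B$ blows up exactly in cases $(a)$ and $(c)$—are routine once organized as the contradiction above.
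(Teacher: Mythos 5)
Your proof is correct, but it routes the hard case differently from the paper. The paper splits the boundary approach into just two cases: if any coordinate escapes to $\pm\infty$ it invokes the general coercivity statement of Lemma \ref{20211223}$(ii)$ (a strictly convex function with an interior critical point on a convex domain blows up as $\|w\|\to+\infty$) to get $\Psi\to+\infty$, and if an edge collapses it uses \eqref{MMd} to get $C\to+\infty$, with the other summand bounded below by $0$ in each case. You instead use $C$ to rule out the $-\infty$-escape and the edge-collapse cases (via Lemma \ref{2.3} and \eqref{MMd}, exactly as in the properness argument for Theorem \ref{1234}), and then prove the coercivity of $\Psi$ by hand in the remaining $+\infty$-directions with the midpoint inequality $\Psi(w)-\Psi(w^\ast)\ge\tfrac12\,g'(\tfrac12)$, using \eqref{MMa}--\eqref{MMc} to kill $B_i$ at the midpoint so that the linear terms $b_i(w_i-w_i^\ast)$ dominate. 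Your argument is longer but more self-contained: it only needs part $(i)$ of Lemma \ref{20211223} and exploits the specific structure of $B$, whereas the paper's is shorter at the cost of citing the coercivity lemma as a black box. One small point to tighten: your assertion that ``$\cosh\theta_i^f$ can blow up only through a collapsing edge'' is literally false --- Lemma \ref{2.3} exhibits blow-up with all edge lengths tending to $+\infty$ --- so the boundedness of the angles at the midpoint really requires \emph{both} facts you established (no collapsing edge \emph{and} no coordinate tending to $-\infty$), together with an estimate such as $X\le C\,e^{-4w_i}$ extracted from \eqref{098765}; with that justification supplied, the argument goes through.
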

\begin{proof}
The proof is split into the following two situations:
\begin{itemize}
\item[$(i)$] There exists $i$ such that $w_{i}\to-\infty$ or $w_{i}\to+\infty$. No matter which case occurs, we have  $\|w\| \to +\infty$. Since $\Psi$ is strictly convex and $w^{\ast}$ is a critical point, the following Lemma \ref{20211223} implies
\[
\lim_{\|w\| \to +\infty} \Psi(w)=+\infty.
\]
Thus $\Lambda(w)\to+\infty$.
\item[$(ii)$] For at least one ideal face $f=ijk$, the following property holds:
\begin{equation}
\nonumber
w_{i}+w_{j}\to-\ln\cosh\frac{l_{ij}^{0}}{2}.
\end{equation}
It follows from Lemma \ref{2.4} that $B_{i}\to+\infty$. Therefore, $C(w)\to+\infty$.
Meanwhile, applying Lemma \ref{20211223}, we derive that $w^{\ast}$ is a global minimal point of $\Psi$. Namely,
\[
\Psi(w)\geq\Psi(w^{\ast}).
\]
As a result, $\Lambda(w)\to+\infty$.
\end{itemize}

To summarize, we finish the proof.
\end{proof}

\begin{lemma}\label{20211223}
Let h be a smooth strictly convex function defined in a convex set $\Omega$ with a critical point $p\in \Omega$. Then the following properties hold:
\begin{itemize}
\item[$(i)$] $p$ is the unique global minimum point of $h$.
\item[$(ii)$] If $\Omega$ is unbounded, then $\lim_{\|x\| \to +\infty} h(x)=+\infty$.
\end{itemize}
\end{lemma}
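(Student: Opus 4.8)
The plan is to prove both parts by elementary convex analysis, exploiting the fact that a critical point of a strictly convex function behaves like — and must be — a global minimum.

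For part $(i)$ I would use the first-order (gradient) characterization of strict convexity. The claim reduces to showing $h(x)>h(p)$ for every $x\in\Omega$ with $x\neq p$. To establish this, restrict $h$ to the segment joining $p$ and $x$ by setting $g(t)=h\big(p+t(x-p)\big)$ for $t\in[0,1]$. Since the parametrization is affine and non-constant, $g$ is a strictly convex function of one variable, so $g'$ is strictly increasing and
\[
h(x)-h(p)=g(1)-g(0)=\int_0^1 g'(t)\,dt>g'(0)=\langle\nabla h(p),\,x-p\rangle .
\]
Because $p$ is a critical point, $\nabla h(p)=0$, and the inequality collapses to $h(x)>h(p)$. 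This simultaneously shows that $p$ is a global minimum and that it is the only one.

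For part $(ii)$ I would upgrade this strict minimality into a quantitative linear growth estimate by a rescaling argument. Since $p$ is a critical point it lies in the interior of $\Omega$, so $\Omega$ contains a closed ball of some radius $r>0$ about $p$; let $S=\{q:\|q-p\|=r\}$, which is compact and contained in $\Omega$. By continuity $h$ attains a minimum on $S$, and by part $(i)$ this minimum strictly exceeds $h(p)$, say it equals $h(p)+\delta$ with $\delta>0$. Now take any $x\in\Omega$ with $R:=\|x-p\|>r$ and consider $q=p+\tfrac{r}{R}(x-p)$, which lies on $S$ and, by convexity of $\Omega$, in $\Omega$. Writing $t=r/R\in(0,1)$ we have $q=(1-t)p+tx$, so convexity of $h$ gives $h(q)\le(1-t)h(p)+t\,h(x)$. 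Solving for $h(x)$ and using $h(q)\ge h(p)+\delta$ yields
\[
h(x)\ge h(p)+\frac{\delta}{t}=h(p)+\frac{\delta}{r}\,\|x-p\| ,
\]
which tends to $+\infty$ as $\|x\|\to+\infty$, since $\|x-p\|\ge\|x\|-\|p\|$. This is exactly the asserted coercivity.

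The one-variable convexity computation and the algebraic rearrangement are routine. The only point deserving genuine care — and the step I would state most explicitly — is the passage from the pointwise strict inequality $h(q)>h(p)$ of part $(i)$ to a \emph{uniform} gap $\delta>0$ over the sphere $S$. This is precisely where the compactness of $S$, and hence the interiority of the critical point $p$, is indispensable; without it one could only guarantee $h(x)>h(p)$ with no lower rate of growth.
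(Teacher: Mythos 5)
Your proof is correct and complete. Note that the paper itself does not prove this lemma at all --- it merely remarks that the result is elementary and points to \cite{GHZ,GHZ2} --- so there is no in-paper argument to compare against; your write-up supplies exactly the standard argument one would expect those references to contain. Part $(i)$ via the one-dimensional restriction $g(t)=h(p+t(x-p))$ and the gradient inequality $h(x)-h(p)>\langle\nabla h(p),x-p\rangle=0$ is airtight, and part $(ii)$ correctly identifies the one nontrivial point: upgrading the pointwise inequality to a uniform gap $\delta>0$ on a small sphere around $p$ by compactness, then propagating it linearly outward by convexity. The only implicit assumption worth flagging is that $p$ lies in the interior of $\Omega$ (so that the sphere $S$ is contained in $\Omega$); the lemma as stated only says ``convex set,'' but in the paper's application $\Omega=W$ is open and $p=w^{\ast}$ is interior, so this costs nothing. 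Your estimate $h(x)\ge h(p)+\frac{\delta}{r}\|x-p\|$ is in fact slightly stronger than the stated conclusion, giving linear coercivity rather than mere divergence.
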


This is an elementary result. One refers to \cite{GHZ,GHZ2} for a proof. Now we are ready to prove Theorem \ref{123456}.

\medskip
\begin{proof} [Proof of Theorem \ref{123456}] First we prove the flow exists all the time. Since each $B_{i}$ depends on $w$ smoothly, $(B_{1},B_{2},\cdots,B_{n})$ is locally Lipschitz continuous. By classical ODE theory \cite{T3}, the flow (\ref{1111111111}) has a unique solution $w(t)$ on $[0,\epsilon)$ for some  $\epsilon>0$. Consequently, $w(t)$ exists in a maximal time interval $[0,T_{0})$ with $0<T_{0}\leq +\infty$. It suffices to show $T_{0}=+\infty$. Assume on the contrary that $T_{0}$ is finite. Then there exists $t_{m}\to T_{0}$ such that $w(t_{m})$ approaches the boundary of $W$.
By Lemma \ref{20211224}, we have
\[
\Lambda(w(t_{m}))\to+\infty.
 \]
However, a direct computation yields
\begin{equation}
\begin{aligned}
\nonumber
\frac{d\Lambda}{dt}
&=\sum_{i=1}^n \frac{\partial \Psi}{\partial w_{i}}\frac{dw_{i}}{dt}+2\sum_{i=1}^n\sum_{j=1}^n(B_{i}-b_{i})\frac{\partial B_{i}}{\partial w_{j}}\frac{dw_{j}}{dt}\\
&=-\sum_{i=1}^n(B_{i}-b_{i})^2+2(B_{1}-b_{1},\cdots,B_{n}-b_{n})L(B_{1}-b_{1},\cdots,B_{n}-b_{n})^{T}\\
&\leq0.
\end{aligned}
\end{equation}
It follows that
\[
\Lambda(w(t_{m}))\leq\Lambda(w(0)),
\]
which leads to a contradiction. Consequently, $T_0=+\infty$. That means $w(t)$ exists for all time. In addition, similar reasoning implies $w(t)$ stays in a compact set of $W$.

The next step is to prove $w(t)$ converges exponentially fast to $w^\ast$.
Since  $w(t)$ stays in a compact set of $W$ and $L$ depends on $w$ continuously, there exists $\lambda_{0}>0$ such that
\begin{equation}
\begin{aligned}
\label{20211027}
\nonumber
C^\prime(w)&=2\sum_{i=1}^n\sum_{j=1}^n(B_{i}-b_{i})\frac{\partial B_{i}}{\partial w_{j}}\frac{dw_{j}}{dt}\\
&=2(B_{1}-b_{1},\cdots,B_{n}-b_{n})L(B_{1}-b_{1},\cdots,B_{n}-b_{n})^{T}\\
&\leq-2\lambda_{0}\bigg[\sum_{i=1}^n(B_{i}-b_{i})^2\bigg]\\
&=-2\lambda_{0} C(w).
\end{aligned}
\end{equation}
As a result,
\[
\sum_{i=1}^n(B_{i}-b_{i})^2 =C(w)\leq C(w(0))e^{-2\lambda_{0} t}.
\]
Then
\[
|B_{i}-b_{i}|\leq \sqrt{C(w(0))}e^{-\lambda_{0}t},
\]
which yields
\[
|w_{i}-w^{\ast}_{i}|=\left|\int_{\infty}^{t}(B_{i}-b_{i})dt\right|\leq\frac{\sqrt{C(w(0))}}{\lambda_{0}}e^{-\lambda_{0}t}.
\]
We thus prove that $w$ converges exponentially fast to $w^{\ast}$.
\end{proof}

\begin{remark}
In fact, $\Lambda(w)$ is a proper Lyapunov function of the combinatorial Yamabe flow. From Barbashin-Krasovskii Theorem \cite{BK}, it follows that $w^{\ast}$ is a globally asymptotically stable equilibrium point of \eqref{1111111111}. Namely, each point $w(0)\in W$ is attracted to $w^\ast$ by the system. This provides an alternative approach to the convergence of $w(t)$.
\end{remark}

\acknowledgements{\rm The first author would like to thank Ren Guo, Qianghua Luo, Yaping Xu and Te Ba for their encouragement and comments. The second author thanks Yanwen Luo for comments and communications.}

\noindent {Shengyu Li, lishengyu@hnu.edu.cn\\[2pt]
\emph{School of Mathematics, Hunan University, Changsha 410082, P.R. China.}\\[2pt]

\noindent Xu Xu, xuxu2@whu.edu.cn\\[2pt]
\emph{School of Mathematics and Statistics, Wuhan University, Wuhan 430072, P.R.China.}\\[2pt]

\noindent Ze Zhou, zhouze@hnu.edu.cn\\[2pt]
\emph{School of Mathematics, Hunan University, Changsha, 410082, P.R. China.}

\end{document}